\newif\ifcdc

\ifcdc
\documentclass[letterpaper,10pt,conference]{ieeeconf}
\IEEEoverridecommandlockouts
\else
\documentclass{article}
\usepackage[centering,margin=1.5cm]{geometry}
\fi

\makeatletter
\let\NAT@parse\undefined
\makeatother
\usepackage[dvipsnames]{xcolor}          
\usepackage[colorlinks=true, citecolor=blue, linkcolor=black, urlcolor=blue]{hyperref}  
\usepackage[bottom,hang,flushmargin]{footmisc}

\usepackage{setspace}

\let\labelindent\relax
\usepackage{lscape} 
\usepackage{caption} 
\usepackage{amsmath,amssymb,amsfonts,amsthm}
\usepackage{comment}
\usepackage{listings}
\usepackage{verbatim}
\usepackage{textcomp}
\usepackage{enumitem}
\usepackage{stmaryrd}
\usepackage{cases,xspace,enumitem}
\usepackage{subcaption}
\usepackage{multirow}
\usepackage{mdframed}
\usepackage{cancel}
\usepackage{svg}
\usepackage{hyperref}
\usepackage[rightcaption]{sidecap}

\usepackage{version}

\DeclareCaptionLabelFormat{figure}{{Figure #2}}
\usepackage[prependcaption,colorinlistoftodos]{todonotes}

\hypersetup{colorlinks=true, linkcolor=blue, breaklinks=true, urlcolor=blue, citecolor=blue}

\newtheorem{definition}{Definition}
\newtheorem{thm}{Theorem}[section]
\newtheorem{lem}{Lemma}[section]

\newtheorem{remark}[thm]{Remark}

\newtheorem*{assumption*}{Assumption}
\newenvironment{rem}{\begin{remark}}{\qed\end{remark}}

\newcommand{\bd}{\begin{definition}} 
\newcommand{\ed}{\end{definition}}
\newcommand{\bt}{\begin{thm}} 
\newcommand{\et}{\end{thm}}

\newcommand{\R}{\mathbb{R}}

\newcommand{\mcC}{\mathcal{C}}

\newcommand{\bi}{\begin{itemize}} 
\newcommand{\ei}{\end{itemize}} 
\newcommand{\bds}{\begin{description}} 
\newcommand{\eds}{\end{description}} 
\newcommand{\beq}{\begin{equation}} 
\newcommand{\eeq}{\end{equation}} 

\newcommand{\norm}[1]{\|#1\|}

\newcommand{\lognorm}[2]{\mu_{#1}(#2)}
\renewcommand{\lognorm}[2]{\mu_{#2}(#1)} 

\newcommand{\subscr}[2]{#1_{\textup{#2}}}

\newcommand{\setdef}[2]{\{#1 \; | \; #2\}}

\newcommand{\map}[3]{#1\colon #2 \rightarrow #3}

\DeclareMathOperator*{\argmin}{arg\,min}

\newcommand{\ds}{\displaystyle}

\newcommand{\e}{\mathrm{e}}

\newcommand{\amin}{\subscr{a}{min}}
\newcommand{\amax}{\subscr{a}{max}}

\newcommand{\0}{\mbox{\fontencoding{U}\fontfamily{bbold}\selectfont0}}

\definecolor{gnred}{RGB}{255,91,89}

\definecolor{gnred1}{RGB}{71,0,0} 
\definecolor{gnred2}{RGB}{117,0,0} 
\definecolor{gnred3}{RGB}{164,0,0} 
\definecolor{gnred4}{RGB}{211,0,0} 
\definecolor{gnred5}{RGB}{255,0,0} 
\definecolor{gnred6}{RGB}{255,42,34} 
\definecolor{gnred7}{RGB}{255,91,89} 

\definecolor{gnblue1}{RGB}{0,36,71}   
\definecolor{gnblue2}{RGB}{0,60,118}  
\definecolor{gnblue3}{RGB}{0,85,164}
\definecolor{gnblue4}{RGB}{0,108,212}
\definecolor{gnblue4}{RGB}{0,108,212}
\definecolor{gnblue5}{RGB}{0,133,255}  
\definecolor{gnblue6}{RGB}{35,156,255} 
\definecolor{gnblue7}{RGB}{88,177,255} 

\definecolor{gnbrown1}{RGB}{71,27,0}  
\definecolor{gnbrown2}{RGB}{117,45,0} 
\definecolor{gnbrown3}{RGB}{164,62,0} 
\definecolor{gnbrown4}{RGB}{211,80,0} 
\definecolor{gnbrown5}{RGB}{255,97,0} 
\definecolor{gnbrown6}{RGB}{255,127,26} 
\definecolor{gnbrown7}{RGB}{255,155,86} 

\usepackage{booktabs} 
\usetikzlibrary{shapes,arrows,positioning,calc}
\usepackage[most]{tcolorbox}

\ifcdc
\includeversion{cdc}
\excludeversion{arxiv}
\else
\excludeversion{cdc}   
\includeversion{arxiv} 
\fi
\makeatletter
\ifcdc
  \renewenvironment{cdc}{\ignorespaces}{\unskip\ignorespacesafterend}
\else
  
\fi
\makeatother

\definecolor{theoryblue}{RGB}{0, 102, 204}
\definecolor{assumptiongreen}{RGB}{0, 102, 51}
\newtcolorbox[auto counter, number within=section]{myassumption}[2][]{%
    colback=assumptiongreen!5,
    colframe=assumptiongreen,
    coltitle=black,              
    fonttitle=\bfseries,
    title={Assumption~\thetcbcounter~(#2)},
    enhanced,
    attach title to upper,
    after title={.\ },
    arc=5pt,
    outer arc=5pt,
    boxrule=0.4pt,
    left=4pt, right=4pt, top=2pt, bottom=2pt,
    drop shadow,
    #1                           
}

\definecolor{defred}{RGB}{188, 108, 107} 
\newtcolorbox[auto counter, number within=section]{mydefinition}[2][]{%
    colback=defred!5,          
    colframe=defred,           
    coltitle=black,             
    fonttitle=\bfseries,
    title={Definition~\thetcbcounter~(#2)},
    enhanced,
    attach title to upper,
    after title={.\ },
    arc=5pt,
    outer arc=5pt,
    boxrule=0.4pt,
    left=4pt, right=4pt, top=2pt, bottom=2pt,
    drop shadow,
    #1                          
}

\definecolor{thmblue}{RGB}{70, 102, 136} 
\newtcolorbox[auto counter, number within=section]{mytheorem}[2][]{%
    colback=thmblue!5,            
    colframe=thmblue,             
    coltitle=black,               
    fonttitle=\bfseries,
    title={Theorem~\thetcbcounter~(#2)},
    enhanced,
    attach title to upper,
    after title={.\ },
    arc=5pt,
    outer arc=5pt,
    boxrule=0.4pt,
    left=4pt, right=4pt, top=2pt, bottom=2pt,
    drop shadow,
    #1                            
}

\newtcolorbox{myassumptionstar}[1]{%
    colback=assumptiongreen!5,
    colframe=assumptiongreen,
    coltitle=black,
    fonttitle=\bfseries,
    title={Assumption~(#1)},    
    enhanced,
    attach title to upper,
    after title={.\ },
    arc=5pt,
    outer arc=5pt,
    boxrule=0.8pt,
    left=5pt, right=5pt, top=5pt, bottom=5pt,
    drop shadow
}

\newcommand\Item[1][]{%
\ifx\relax#1\relax  \item \else \item[#1] \fi
\abovedisplayskip=0pt\abovedisplayshortskip=0pt~\vspace*{-\baselineskip}}

\title{\LARGE \bf 
A Unified Control-Theoretic Framework for Saddle-Point Dynamics in Constrained Optimization
}

\author{Veronica Centorrino$^{1}$, Rawan Hoteit$^{1}$, Efe C. Balta$^{1,2}$, and John Lygeros$^{1}$
\begin{cdc}
\thanks{*This work was supported as a part of NCCR Automation, a National Centre of Competence in Research, funded by the Swiss National Science Foundation (grant number 51NF40\_225155)}
\end{cdc}%
\thanks{$^{1}$ Automatic Control Laboratory (IfA), ETH Zürich, 8092 Zürich, Switzerland. {\tt\small \{vcentorrino, rhoteit, lygeros\}@ethz.ch}}%
\thanks{$^{2}$ Control and Automation Group, inspire AG, 8005 Zürich, Switzerland. {\tt\small \{efe.balta\}@inspire.ch}}%
}

\begin{document}
\maketitle

\begin{cdc}
\thispagestyle{empty}
\pagestyle{empty}
\end{cdc}

\begin{abstract}
This paper studies constrained minimization problems through the lens of feedback control. We introduce a unified control-theoretic framework by showing that a PID feedback law acting on the dual variable induces the \emph{PID saddle-point flow} (PID-SPF), a broad class of saddle-point dynamics associated with the augmented Lagrangian. This framework recovers several classical primal-dual flows as special cases.
We prove that the equilibria of the proposed flow coincide with the stationary points of the original problem.
Our analysis reveals how the feedback gains affect the optimization: integral action enforces constraint satisfaction, proportional action introduces the augmented Lagrangian structure, and derivative action modifies the geometry of the primal dynamics by inducing a state-dependent Riemannian metric.
Moreover, for convex problems with affine constraints, we establish global exponential convergence by leveraging contraction theory for all admissible PID gains, providing in the process explicit bounds on the convergence rate.
Finally, we validate our theoretical results on numerical examples including an application to bilevel optimization.
\end{abstract}
\section{Introduction}
We study equality-constrained optimization problems (OPs) of the form
\begin{equation}
\begin{aligned}
    \min_{x \in \R^n} & f(x)\\
    \text{s.t. } & h(x) = \0_m,
\end{aligned}
\label{eq:eq_constrained}
\end{equation}
where $\map{f}{\R^n}{\R}$ and $\map{h}{\R^n}{\R^m}$ are continuously differentiable functions.
Such problems are ubiquitous in engineering, science, and machine learning, where constraints naturally encode physical laws, or other coupling constraints between decision variables.

A classical approach to solving~\eqref{eq:eq_constrained} relies on primal-dual flows derived from the associated Lagrangian~\cite{KJA-LH-HU:58, GQ-NL:19}. From a control-theoretic perspective, these flows can be viewed as closed-loop dynamical systems where the Lagrange multipliers act as control inputs designed to enforce constraint satisfaction. In this light, the standard primal-dual dynamics results from an integral controller acting on the constraint error.
The recent work~\cite{VC-SMF-SP-DR:25} formalizes this perspective via a control-theoretic framework for equality-constrained problems where the primal dynamics defines the plant, the constraint violation constitutes the system output, and the multiplier is the control input. This viewpoint shifts the focus from algorithm design to feedback control design and raises the question of how different feedback laws affect the resulting optimization dynamics and their geometry.

Motivated by this perspective, we introduce a proportional-integral-derivative (PID) feedback law on the dual variable. This approach leads to a unified class of saddle-point dynamics and provides a systematic interpretation of classical primal-dual flows, including Arrow-Hurwicz-Uzawa, augmented Lagrangian, and projected gradient flows~\cite{KJA-LH-HU:58, DPB:97}.

\textit{Literature review:}
Studying OPs via continuous-time dynamics is a classical problem dating back to~\cite{KJA-LH-HU:58}. This perspective has gained renewed interest thanks to developments from, e.g., online  feedback optimization~\cite{GB-JC-JIP-EDA:22} and the interpretation of optimization algorithms through the lens of feedback control~\cite{FD-ZH-GB-SB-JL-MM:24, AH-ZH-SB-GH-FD:24}. While nonlinear sign-based control has been deployed for projected gradient flows solving linearly-equality constrained convex OPs~\cite{CF-RW:20}, the standard approach for constrained OPs is via primal-dual dynamics~\cite{KJA-LH-HU:58, GQ-NL:19}. 
The use of Lagrange multipliers as feedback controllers has recently been analyzed for smooth and non-smooth OPs with equality~\cite{VC-SMF-SP-DR:25, RZ-AR-JS-NL:25, VC-FR-FB-GR:25} and inequality~\cite{VC-SMF-SP-DR:24b, RZ-AR-JS-NL:25} constraints, as well as for smooth constrained OPs via control barrier functions~\cite{AA-JC:24}.
For the standard setting of full-rank linear constraints and strongly convex, $L$-smooth objectives, global exponential convergence has been established for both I-~\cite{GQ-NL:19} and PI-controlled dynamics~\cite{VC-SMF-SP-DR:25}. The recent work~\cite{JR-SLJ:26} shows that PI control is equivalent to discrete-time primal-dual dynamics on the augmented Lagrangian.
More broadly, there has been a growing interest in leveraging contracting dynamics to solve OPs~\cite{HDN-TLV-KT-JJES:18, AD-VC-AG-GR-FB:23f, VC-FR-FB-GR:25}. This is motivated by the highly ordered transient and asymptotic behavior properties enjoyed by such dynamics~\cite{FB:26-CTDS_cdc}. Recent work has established contractivity for bilinear saddle-point problems in discrete time via operator-theoretic tools~\cite{CD-MB-PDG-JL-FD:24}.

\textit{Contributions:}
We propose a unified control-theoretic framework for continuous-time saddle-point dynamics arising in equality-constrained optimization.
Our approach builds on a recent closed-loop system interpretation in which the primal dynamics are designed so that its equilibria coincide with the Lagrangian stationary points, while the dual variables act as control inputs to regulate the constraint violation to zero. Within this framework, we show that a PID feedback law combined with a suitable change of variables induces a broad class of dynamics associated with the augmented Lagrangian, the \emph{PID-saddle-point flow} (PID-SPF).

Our main result characterizes how the feedback gains affect the resulting dynamics. Specifically, when the derivative gain is zero, the proposed change of variables defines a global diffeomorphism between the closed-loop dynamics in the original variables and a saddle-point flow of the augmented Lagrangian in the transformed coordinates. When the derivative gain is positive, the PID-SPF corresponds to a Riemannian saddle-point flow, where the derivative action induces a state-dependent metric in the primal space.
We conduct a comprehensive convergence analysis for convex problems with affine constraints and strongly convex, smooth objectives. We establish global exponential convergence of the PID-SPF by leveraging contraction theory. In particular, we show that the proposed dynamics is strongly infinitesimally contracting for all admissible gains and derive explicit bounds on the contraction rate.

Finally, we validate our framework on equality-constrained quadratic programs and on a bilevel optimization problem with a strongly convex lower-level whose optimality condition is subject to uncertainty.
\section{Mathematical Preliminaries}
The sets of real and positive real numbers are denoted by $\R$ and $\R_{>0}$, respectively. For $n \in \mathbb{N}$, $\R^n$ denotes the $n$-dimensional Euclidean space. We denote by $\0_n \in \R^n$ the all-zeros vector in $\R^n$, and by $I_n$ the identity matrix in $\R^{n \times n}$. Given $A, B \in \R^{n\times n}$ symmetric, we write $A \preceq B$ (resp. $A \prec B$) if $B-A$ is positive semidefinite (resp. definite). For a symmetric matrix $A$, we let $\subscr{\lambda}{min}(A)$ and $\subscr{\lambda}{max}(A)$ be its minimum and maximum eigenvalue, respectively.
\paragraph{Norms and Logarithmic Norms} We let $\| \cdot \|$ denote both a norm on $\R^n$ and its corresponding induced matrix norm on $\R^{n \times n}$. Given $A \in \R^{n \times n}$ the \emph{logarithmic norm} (lognorm) induced by $\| \cdot \|$ is 
$$\mu(A) := \lim_{h\to 0^+} \frac{\norm{I_n + h A} -1}{h}.$$
Given a symmetric positive-definite matrix $P \in \R^{n\times n}$, we let $\|\cdot\|_{P}$ be the $P$-weighted $\ell_2$-norm $\|x\|_{P} := \sqrt{x^\top P x}$, $x \in \R^n$. The corresponding lognorm is $\mu_{P}(A) =\min\setdef{b \in \R}{PA + A^\top P \preceq 2bP}$~\cite[Lemma~2.7]{FB:26-CTDS_cdc}.
\paragraph{Calculus and Function Classes}
Given $\map{f}{\R^n}{\R}$ twice differentiable, we denote by $\nabla f(x)$ and $\nabla^2 f(x)$ its gradient and its Hessian matrix, respectively.
For $\map{f}{\R^{n+m}}{\R}$, we let $\nabla_x f(x,y) := \frac{\partial f}{\partial x}(x,y)$, be its partial gradient with respect to $x$. 
Given $\map{h}{\R^n}{\R^m}$ continuously differentiable, $J_h(x) \in \R^{m \times n}$ denotes its Jacobian matrix.
Finally, we recall the following standard definitions.
\begin{mydefinition}[label=def:strongly_convex_smooth]{Strongly convex and $L$-smooth maps}
A map $\map{f}{\R^n}{\R}$ is
\begin{enumerate}[label = (\roman*)]
\item \emph{$\rho$-strongly convex} if there exists $\rho > 0$ such that the map $x \mapsto f(x) - \frac{\rho}{2}\|x\|_2^2$ is convex;
\item \emph{$L$-smooth} if it is differentiable and there exists $L >0$ such that $\nabla f$ is $L$-Lipschitz.
\end{enumerate}
\end{mydefinition}
\paragraph{Contraction Theory}
Consider a dynamical system 
\beq
\label{eq:dynamical_system}
\dot{x}(t) = f\bigl(t,x(t)\bigr),
\eeq
where $\map{f}{\R_{\geq 0} \times \R^n}{\R^n}$. We give the following~\cite{FB:26-CTDS_cdc}.
\begin{mydefinition}[label=def:contracting_system]{Contracting dynamics}
Given a norm $\norm{\cdot}$ with associated lognorm $\mu$, a smooth function $\map{f}{\R_{\geq 0} \times \mcC}{\R^n}$, with $\mcC \subseteq \R^n$ $f$-invariant, open and convex, and a \emph{contraction rate} $c >0$, $f$ is $c$-strongly infinitesimally contracting on $\mcC$ if
$$
\mu\bigl(J_f(t, x)\bigr) \leq -c,\quad \text{ for all } x \in \mcC \textup{ and } t\in \R_{\geq0}.
$$
\end{mydefinition}
\smallskip
If $f$ is contracting, then for any two trajectories $x(\cdot)$ and $y(\cdot)$ of~\eqref{eq:dynamical_system} with initial conditions $x_0$ and $y_0$, respectively,
$$
\|x(t) - y(t)\| \leq \e^{-ct}\|x_0 -y_0\|, \textup{ for all } t \geq 0,
$$
i.e., the distance between the two trajectories converges exponentially with rate $c$. One of the main benefits of contraction theory is that, with just a single condition, it ensures global exponential convergence to the unique equilibrium, along with other useful robustness properties. We refer to~\cite{FB:26-CTDS_cdc} for a recent review of these tools.
\section{Problem Formulation}
\label{sec:problem_formulation}
Consider the equality-constrained optimization problem in~\eqref{eq:eq_constrained}, which we rewrite here for convenience
\begin{align*}
    \min_{x \in \R^n} & f(x)\\
    \text{s.t. } & h(x) = \0_m,
\end{align*}
where $\map{f}{\R^n}{\R}$ and $\map{h}{\R^n}{\R^m}$ are continuously differentiable functions. We assume that~\eqref{eq:eq_constrained} admits at least one feasible point $x^\star$, that is, there exists $x^\star\in \R^n$ such that $h(x^\star)=\0_m$. Starting from the control-theoretic perspective proposed in~\cite{VC-SMF-SP-DR:25}, we reinterpret continuous-time saddle-point flows as a closed-loop system comprising the primal dynamics (the plant) and a multiplier feedback controller.

Consider the Lagrangian associated with~\eqref{eq:eq_constrained}, that is the map $\map{L}{\R^n \times \R^m}{\R}$ defined by
$$L(x, \lambda)=f(x)+\lambda^{\top} h(x),$$
where $\lambda \in \R^m$ is the vector of Lagrange multipliers.
\begin{mydefinition}[label=def:stationary_point]{Stationary point of~\eqref{eq:eq_constrained}}
A pair $(x^{\star},\lambda^{\star}) \in \R^{n+m}$ is a \emph{stationary point} of problem~\eqref{eq:eq_constrained} if
\beq
\label{eq:condition_stationary_point}
\nabla f(x^{\star}) + J_h(x^{\star})^\top\lambda^{\star}= \0_n \quad \text{ and } \quad  h(x^{\star}) = \0_m.
\eeq
\end{mydefinition}
\smallskip
Note that, in general nonlinear programs, stationary points that satisfy the first-order necessary conditions~\cite{DPB:97} are only candidates for optimality and are not necessarily minimizers unless additional conditions (e.g., convexity) hold.

Following~\cite{VC-SMF-SP-DR:25}, we interpret the Lagrange multipliers $\lambda(t) \in \R^m$ as external control inputs acting on the gradient flow of the Lagrangian with respect to the primal variable $x$. This leads to the following input-output system
\beq
\label{eq:open_system}
\begin{cases}
    \dot{x}(t)= - \nabla_x L(x(t), \lambda(t)) = - \nabla f(x(t)) - J_h(x(t))^\top\lambda(t) \\
    y(t)= h(x(t)),
\end{cases}
\eeq
with state $x(t) \in \R^n$, input $\lambda(t) \in \R^m$, and output $y(t) \in \R^m$ representing the constraint violation. 
We recall the following result from~\cite[Lemma 1]{VC-SMF-SP-DR:25}, which characterizes the stationary points of~\eqref{eq:eq_constrained} in terms of the equilibria of~\eqref{eq:open_system}.
\begin{lem}[Stationary points of~\eqref{eq:eq_constrained} and equilibria of~\eqref{eq:open_system}]
\label{lem:stat_points_equilibria}
A point $\left(x^\star, \lambda^\star\right) \in \R^{n+m}$ is a stationary point of~\eqref{eq:eq_constrained} if and only if it is an equilibrium of system~\eqref{eq:open_system} with input $\lambda^\star$, satisfying $h(x^{\star}) = \0_m$.
\end{lem}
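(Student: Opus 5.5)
The plan is to unfold the definitions on both sides and match them term by term. No earlier result is needed beyond Definition~\ref{def:stationary_point} and the form of system~\eqref{eq:open_system}.

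First, I will fix the meaning of equilibrium. With the input frozen at the constant value $\lambda(t)\equiv\lambda^\star$, system~\eqref{eq:open_system} becomes the autonomous ODE $\dot x = -\nabla f(x) - J_h(x)^\top\lambda^\star$. A point $x^\star$ is an equilibrium of this ODE exactly when the vector field vanishes there, that is, when $\nabla f(x^\star) + J_h(x^\star)^\top\lambda^\star = \0_n$. The added requirement in the statement, $h(x^\star)=\0_m$, says that the output $y = h(x^\star)$ vanishes at this equilibrium.

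For the forward direction, suppose $(x^\star,\lambda^\star)$ is a stationary point. The first condition in~\eqref{eq:condition_stationary_point} is precisely the vanishing of $-\nabla_x L(x^\star,\lambda^\star)$. Hence $x^\star$ is an equilibrium of~\eqref{eq:open_system} under the input $\lambda^\star$. The second condition in~\eqref{eq:condition_stationary_point} gives $h(x^\star)=\0_m$ directly.

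For the converse, suppose $x^\star$ is an equilibrium under the input $\lambda^\star$ and $h(x^\star)=\0_m$. The equilibrium property yields $\nabla f(x^\star)+J_h(x^\star)^\top\lambda^\star=\0_n$. Together with $h(x^\star)=\0_m$, this is exactly~\eqref{eq:condition_stationary_point}, so $(x^\star,\lambda^\star)$ is a stationary point.

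I do not expect any genuine obstacle here. The only point needing care is the phrase ``equilibrium with input $\lambda^\star$'': it must be read as an equilibrium of the input-affine system under the constant input $\lambda^\star$. The feasibility condition is not implied by the equilibrium property itself and has to be carried as a separate hypothesis, which is why the statement includes it explicitly.
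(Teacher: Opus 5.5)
Your proof is correct: with ``equilibrium with input $\lambda^\star$'' read as a zero of the vector field of~\eqref{eq:open_system} under the constant input $\lambda^\star$, that equilibrium condition is exactly the first equation in~\eqref{eq:condition_stationary_point}, and the added output condition $h(x^\star)=\0_m$ is the second, so both directions follow immediately. The paper does not prove this lemma itself (it recalls it from~\cite[Lemma~1]{VC-SMF-SP-DR:25}), and your direct unfolding of the definitions is a complete argument for it.
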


As a consequence of Lemma~\ref{lem:stat_points_equilibria}, the control objective is to design a feedback law for $\lambda$ that ensures convergence of~\eqref{eq:open_system} to an equilibrium point while regulating the output to zero. Rather than directly analyzing the resulting closed-loop dynamics induced by a specific given controller, in the next section we address the following structural question:
\begin{tcolorbox}[
    colback=theoryblue!5,     
    colframe=theoryblue,      
    coltitle=white,           
    fonttitle=\bfseries, 
    enhanced, 
    arc=5pt,               
    outer arc=5pt,        
    boxrule=0.8pt,
    drop shadow               
]
\centering
\itshape
How does feedback design on the dual variable shape the structure of saddle-point dynamics?
\end{tcolorbox}
\section{A Unified Control-Theoretic Framework for Saddle-Point Flows}
We show that PID feedback laws induce a unified class of saddle-point flows associated with equality-constrained optimization problems. The resulting system is illustrated in Figure~\ref{fig: Block Diagram}.
\begin{figure}
    \centering
    \includegraphics[width=0.45\linewidth]{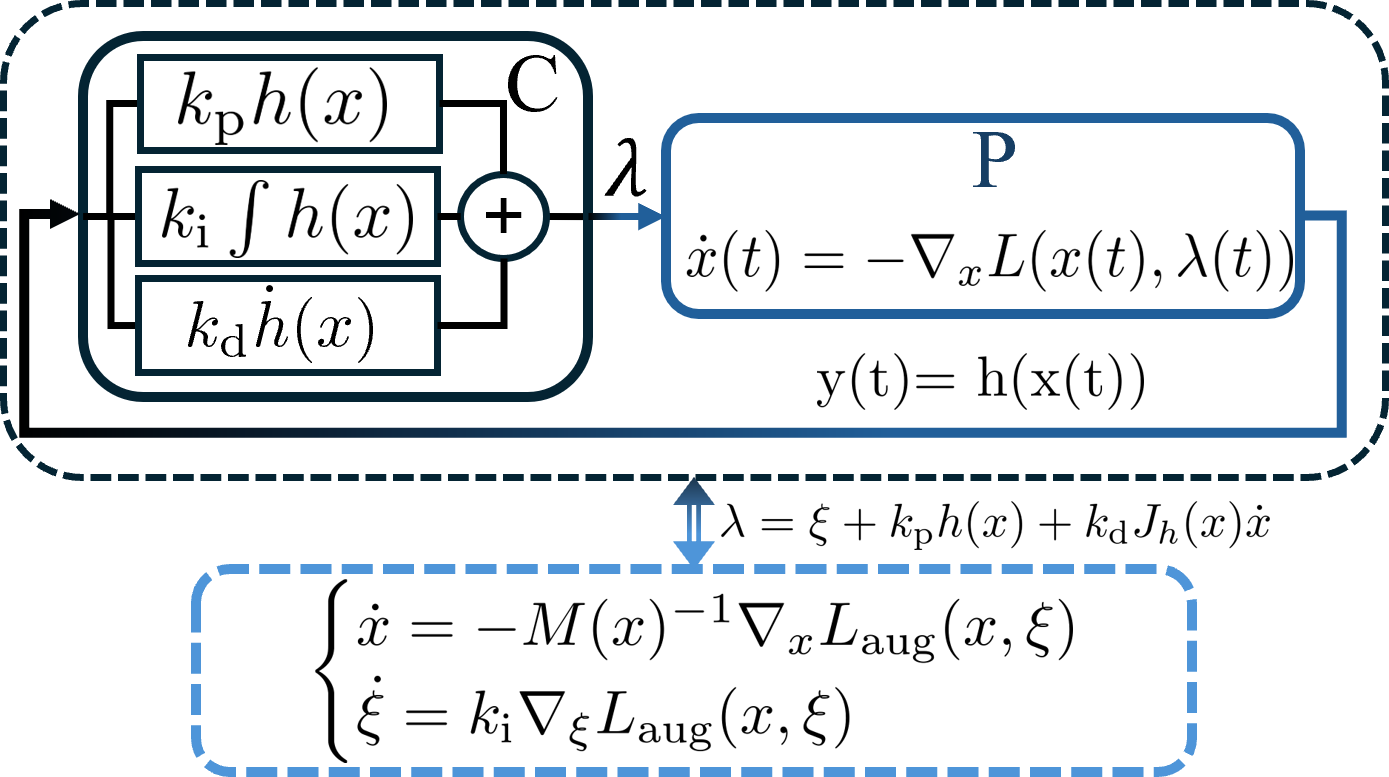}
    \caption{PID-Controlled Primal-Dual Dynamics and the Equivalent Saddle Flows}
    \label{fig: Block Diagram}
\end{figure}
\subsection{PID-Saddle-Point Flow}
While~\cite{VC-SMF-SP-DR:25} considers only a PI controller (resulting in the \emph{PI-controlled multipliers optimization} (PI-CMO) dynamics), we generalize this approach by considering also a derivative term. Let $\lambda(t)$ denote the output of the PID controller
\beq
\label{eq:pid_controller}
\begin{aligned}
\lambda(t) = \subscr{k}{i} \int_0^t y(\tau)d\tau + \subscr{k}{p} y(t) + \subscr{k}{d} \dot{y}(t) = \subscr{k}{i} \int_0^t h(x(\tau))d\tau + \subscr{k}{p} h(x(t)) + \subscr{k}{d} J_h(x(t))\dot{x}(t),
\end{aligned}
\eeq
where $\subscr{k}{i} \in \R_{>0}$ and $\subscr{k}{p},\subscr{k}{d}\in\R_{\ge 0}$ denote the integral, proportional, and derivative gains, respectively. For simplicity, we subsequently drop $(t)$.
As we formalize below, the PID structure in \eqref{eq:pid_controller} induces three distinct mechanisms:
\begin{enumerate}[label=\textup{(\roman*)}]
    \item the \emph{integral term} enforces constraint satisfaction, by accumulating constraint violations and driving them to zero, and thus has to be strictly positive;
    \item the \emph{proportional term} $\subscr{k}{p} h(x)$ modifies the energy landscape by introducing an augmented term to the Lagrangian;
    \item the \emph{derivative term} $\subscr{k}{d} J_h(x)\dot{x}$ alters the geometry of the primal dynamics by inducing a state-dependent metric.
\end{enumerate}
\smallskip
Allowing $\subscr{k}{p}$ and $\subscr{k}{d}$ to vanish recovers the I, PI, and ID controllers.
Differentiating \eqref{eq:pid_controller} and substituting the result into \eqref{eq:open_system} yields the following extension of the PI-CMO dynamics, the PID-CMO dynamics
\begin{equation}
\label{eq:pid_cmo}
\begin{cases}
\dot{x} = - \nabla f(x) - J_h(x)^\top \lambda, \\
\dot{\lambda} = \subscr{k}{i} h(x) + \subscr{k}{p} J_h(x)\dot{x} + \subscr{k}{d} \frac{d}{dt}\big(J_h(x)\dot{x}\big).
\end{cases}
\end{equation}
The derivative term introduces second-order and state-dependent coupling through $\ddot{x}(t)$, which makes the resulting dynamics difficult to analyze. Instead, we propose a change of variables that yields a family of saddle-point flows.
Specifically, let $\xi := \subscr{k}{i} \int_0^t h(x(\tau))d\tau \in\R^m$, then
\beq
\label{eq:controller_general}
\lambda = \xi + \subscr{k}{p} h(x) + \subscr{k}{d} J_h(x)\dot{x},\quad \text{ and } \quad \dot{\xi} = \subscr{k}{i} h(x).
\eeq
Substituting~\eqref{eq:controller_general} into~\eqref{eq:open_system} yields the dynamics
\beq
\label{eq:closed_loop_metric}
\begin{cases}
\dot{x}(t)= -\nabla f(x) - J_h(x)^\top \left(\xi + \subscr{k}{p} h(x) + \subscr{k}{d} J_h(x)\dot{x}\right) \\
\dot{\xi} = \subscr{k}{i} h(x),
\end{cases}
\iff
\begin{cases}
M(x)\dot{x} = - \nabla f(x) - J_h(x)^\top \xi - \subscr{k}{p} J_h(x)^\top h(x)\\
\dot{\xi} = \subscr{k}{i} h(x),
\end{cases}
\eeq
where $\xi\in\R^m$ is the internal (integral) state, and we have defined the positive definite matrix $M(x):= I_n + \subscr{k}{d} J_h(x)^\top J_h(x)$. Note that for $\subscr{k}{d}\geq 0$, the matrix $M(x)$ is always symmetric and positive definite, since $J_h(x)^\top J_h(x) \succeq 0$, for all $x \in \R^n$.
We refer to the dynamics~\eqref{eq:closed_loop_metric} as the \emph{PID saddle-point flow (PID-SPF)}.
In particular, the closed-loop equilibria coincide with the stationary points of the equality-constrained optimization problem.
\begin{mytheorem}[label=thm:unified_pid_framework]{PID controllers generate saddle-point flows of the augmented Lagrangian}
Consider the equality-constrained problem~\eqref{eq:eq_constrained}, where $\map{f}{\R^n}{\R}$ is continuously differentiable and $\map{h}{\R^n}{\R^m}$ is twice continuously differentiable. Given $\subscr{k}{p} \geq0 $, $\subscr{k}{i} > 0$, and $\subscr{k}{d} \geq 0$, consider the dynamics~\eqref{eq:closed_loop_metric}, the augmented Lagrangian
$$
\subscr{L}{aug}(x,\xi) = f(x) + \xi^\top h(x) + \frac{\subscr{k}{p}}{2} \norm{h(x)}^2,
$$
and the metric $M(x) := I_n + \subscr{k}{d} J_h(x)^\top J_h(x)$. Then:
\begin{enumerate}[label=\textup{(\roman*)}]
\item \label{thm_unified:item1}
The equilibria of~\eqref{eq:pid_cmo} coincide with the equilibria of~\eqref{eq:closed_loop_metric} and with the stationary points of~\eqref{eq:eq_constrained};
\item \label{thm_unified:item2}
If $\subscr{k}{d} = 0$, the coordinate transformation
\beq
\label{eq:diffeo}
\map{T}{(x,\lambda)}{(x,\xi) := (x,\lambda - \subscr{k}{p} h(x))}
\eeq
is a smooth global diffeomorphism;
\item \label{thm_unified:item3}
If $\subscr{k}{d} > 0$, the closed-loop dynamics~\eqref{eq:closed_loop_metric} coincide with the Riemannian saddle-point flow
\beq
\label{eq:riemannian_saddle_explicit}
\begin{cases}
\dot x = - M(x)^{-1} \nabla_x \subscr{L}{aug}(x,\xi), \\
\dot\xi = \subscr{k}{i} \nabla_\xi \subscr{L}{aug}(x,\xi).
\end{cases}
\eeq
Equivalently, the primal dynamics correspond to gradient descent of $\subscr{L}{aug}$ under the Riemannian metric induced by $M(x)$, while the dual dynamics correspond to Euclidean gradient ascent.
\end{enumerate}
\end{mytheorem}
\begin{proof}
First, we prove item~\ref{thm_unified:item1}. Let $(x^\star, \lambda^\star)$ be an equilibrium point of the closed-loop dynamics~\eqref{eq:open_system} -  \eqref{eq:pid_controller}. Since $\subscr{k}{i} \neq 0$, the equilibrium conditions $\dot{x}=\0_n$ and $\dot{\lambda}=\0_m$ imply $h(x^\star)=\0_m$. This in turn implies, $\xi^\star := \lambda^\star - \subscr{k}{p} h(x^\star) - \subscr{k}{d} J_h(x^\star)\dot{x}^\star = \lambda^\star$. Therefore $(x^\star, \lambda^\star) = (x^\star, \xi^\star)$ is an equilibrium of~\eqref{eq:closed_loop_metric}. Substituting into~\eqref{eq:closed_loop_metric} yields
$$
\begin{cases}
\nabla f(x^\star) + J_h(x^\star)^\top \lambda^\star + \subscr{k}{p} \nabla \left(\frac{1}{2}\norm{h(x^\star)}^2\right) = \nabla f(x^\star) + J_h(x^\star)^\top \lambda^\star = \0_{n},\\
h(x^\star) = \0_{m}.
\end{cases}
$$
These conditions coincide with the first-order optimality conditions~\eqref{eq:condition_stationary_point}, thus proving the statement.

To prove item~\ref{thm_unified:item2}, note that since $h$ is smooth, the map $T$ is smooth. Its inverse is explicitly given by $T^{-1}(x,\xi) = \bigl(x,\xi + \subscr{k}{p} h(x)\bigr)$, which is also smooth.
Moreover, its Jacobian is given by
\[
J_T(x,\lambda) =
\begin{bmatrix}
I_n & 0 \\
- \subscr{k}{p} J_h(x) & I_m
\end{bmatrix},
\]
which is block triangular with determinant equal to $1$. Hence $J_T(x,\lambda)$ is nonsingular for all $(x,\lambda) \in \R^{n+m}$. Therefore $T$ is a smooth bijection with smooth inverse, and thus a global diffeomorphism.

Finally, we show item~\ref{thm_unified:item3}. Note that $\nabla_x \subscr{L}{aug}(x,\xi) = \nabla f(x) + J_h(x)^\top \xi + \subscr{k}{p} J_h(x)^\top h(x)$ and $\nabla_{\xi} \subscr{L}{aug}(x,\xi) = h(x)$.
Moreover, $M(x)\succ 0$ implies that the equation $M(x)\dot x = -\nabla_x \subscr{L}{aug}(x,\xi)$ admits the unique solution $\dot x = - M(x)^{-1}\nabla_x \subscr{L}{aug}(x,\xi).$ The dual equation follows directly from $\dot \xi = \subscr{k}{i} h(x) = \subscr{k}{i} \nabla_\xi \subscr{L}{aug}(x,\xi)$. This proves that the closed-loop dynamics~\eqref{eq:closed_loop_metric} coincides with the saddle-point flow~\eqref{eq:riemannian_saddle_explicit}.
\smallskip
\end{proof}
For the existence and uniqueness of the trajectories of~\eqref{eq:closed_loop_metric} it suffices that $h$ is continuously differentiable. This assumption guarantees continuity of the induced metric $M(x)$ and local Lipschitz continuity of the vector field. However, to interpret~\eqref{eq:closed_loop_metric} as a Riemannian flow in the classical sense, we require $h$ to be twice continuously differentiable so that the metric tensor $M(x)$ is continuously differentiable. 
Notably, in the absence of the derivative term (i.e., when $\subscr{k}{d}=0$), the metric reduces to the identity and all results in Theorem~\eqref{thm:unified_pid_framework} only require $h$ to be continuously differentiable.

From the perspective of dual variables, it is important to note that $\xi$ does not coincide with the Lagrange multiplier $\lambda$ of problem~\eqref{eq:eq_constrained}. Instead, it represents a shifted version of $\lambda$ and plays the role of the multiplier associated with the augmented Lagrangian. At equilibrium we have $h(x^\star)=\0_m$ and $\dot{x}^\star=\0_n$, so that $\lambda^\star=\xi^\star$. Therefore $\xi$ converges to the true Lagrange multiplier of the constrained problem.

Theorem~\ref{thm:unified_pid_framework}.\ref{thm_unified:item2} establishes a correspondence between the PI-SPF (that is, the dynamics~\eqref{eq:closed_loop_metric} with $\subscr{k}{d}=0$) and the PI-CMO dynamics introduced in~\cite{VC-SMF-SP-DR:25} via the diffeomorphism in~\eqref{eq:diffeo}. In particular, the smooth transformation $T$ defines a relation between the two dynamical systems, preserving trajectories in the primal variable while inducing a smooth reparameterization of the dual variables. Consequently, the two systems share the same asymptotic properties, while their transient behavior and convergence rates may differ due to the distinct evolution of the dual variables.

\begin{rem}[Comparison with~\cite{JR-SLJ:26}]
Recent work~\cite{JR-SLJ:26} adopts an optimization perspective for discrete-time PI control, establishing connections with augmented Lagrangian methods. In contrast, our analysis is formulated in continuous time and characterizes the induced dynamics as a saddle-point flow within a control-theoretic framework. Including the derivative term further generalizes these results, yielding a fundamentally different structure: a Riemannian saddle-point flow with a state-dependent metric.
\end{rem}
Table~\ref{tab:pid_special_cases} summarizes the connections between PID-SPF and other known flows.
\begin{table}[!h]
\centering
\begin{tabular}{c c c c c}
\hline
Controller & $\subscr{k}{i}$ & $\subscr{k}{p}$ & $\subscr{k}{d}$ & Resulting Dynamics \\
\hline\\[-8pt]
I   & $>0$ & $0$ & $0$ & Arrow-Hurwicz-Uzawa flow \\[4pt]

PI  & $>0$ & $>0$ & $0$ & Augmented Lagrangian primal-dual \\[4pt]

PID & $>0$ & $\geq 0$ & $>0$ & Riemannian saddle-point flow \\[4pt]

PID & $>0$ & $\geq 0$ & $\to\infty$ & Projected saddle-point flow \\[2pt]
\hline
\end{tabular}
\caption{Correspondence between PID controller and saddle-point dynamics. The integral gain is always positive to ensure feasibility.}
\label{tab:pid_special_cases}
\end{table}

\subsection{Convergence Analysis of PID-SPF in Convex Settings}
\label{sec:convergence}
We study the convergence properties of PID-SPF in the convex setting with affine constraints. In this case, the Jacobian of the constraint map is constant, which simplifies the geometry of the dynamics and allows us to establish global exponential convergence using contraction theory.
Consider $h(x) := Ax - b$, where $A \in \R^{m \times n}$, $b \in \R^m$.
The equality-constrained optimization problem then becomes
\begin{align*}
\min_{x \in \R^n} & f(x)\\
\text{s.t. } & Ax = b.
\end{align*}
The PID-SPF~\eqref{eq:riemannian_saddle_explicit} becomes
\beq
\label{eq:riemannian_saddle_affine}
\begin{cases}
\dot x = {-} \left(I_n {+} \subscr{k}{d} A^\top A\right)^{-1} \left(\nabla f(x) {+} A^\top \xi {+} \subscr{k}{p} A^\top(Ax {-} b)\right), \\
\dot\xi = \subscr{k}{i} \left(Ax - b\right).
\end{cases}
\eeq
We let $z = (x,\xi) \in \R^{n + m}$ and  $\map{F}{\R^{n+m}}{\R^{n+m}}$ be the vector field~\eqref{eq:riemannian_saddle_affine} for $\dot{z} = F(z)$. 
We work under the following assumptions on the function $f$ and the matrix $A$.
\begin{myassumptionstar}{Convexity and Constraint Structure}
For the dynamics~\eqref{eq:riemannian_saddle_affine}, assume:
\begin{enumerate}[label=\textup{($A$\arabic*)}, leftmargin=3em]
    \item \label{ass:1_f} the function $\map{f}{\R^n}{\R}$ is $\rho$-strongly convex and $L$-smooth;
    \item \label{ass:2_A} 
    the matrix $A \in \R^{m \times n}$ satisfies $\amin I_m \preceq AA^\top \preceq \amax I_m$ for $\amin,\amax \in \R_{>0}$.
\end{enumerate}
\end{myassumptionstar}
\smallskip
Assumption~\ref{ass:2_A} implies that $A$ has full row rank and, in particular, that $m \leq n$. Under Assumptions~\ref{ass:1_f} and~\ref{ass:2_A}, problem~\eqref{eq:eq_constrained} admits a unique global minimum. These assumptions are standard in the literature when establishing global convergence to the equilibrium (see, e.g.,~\cite{GQ-NL:19, AD-VC-AG-GR-FB:23f, VC-SMF-SP-DR:25}).
The next theorem shows that, under this Assumption, the PID-SPF~\eqref{eq:riemannian_saddle_affine} is globally contracting.
\begin{mytheorem}[label=thm:riemannian_saddle_affine-contractivity]{Contractivity of~\eqref{eq:riemannian_saddle_affine}}
Under Assumptions~\ref{ass:1_f} and~\ref{ass:2_A}, for any $\subscr{k}{p} \geq0 $, $\subscr{k}{i} > 0$, and $\subscr{k}{d} \geq 0$, the PID-SPF~\eqref{eq:riemannian_saddle_affine} is strongly infinitesimally contracting with respect to $\|\cdot\|_{P}$ with rate $c > 0$ where
\begin{align}
P &=
\begin{bmatrix}
M & \alpha A^\top \\
\alpha A & \tau I_m
\end{bmatrix}
\succ 0,  \label{thm:def:P}\\
\alpha&=\frac{1}{2}\min\Big\{\frac{1}{L + \subscr{k}{p}\amax}, \subscr{k}{i}^{-1}\frac{\rho}{\amax}\Big\}, \label{thm:def:alpha} \\
c&=\frac{1}{2}\alpha\subscr{k}{i}\frac{\amin}{1 + \subscr{k}{d}\amax} = \frac{1}{4} \min\Big\{\subscr{k}{i}\frac{1}{L + \subscr{k}{p}\amax} \frac{\amin}{1 + \subscr{k}{d}\amax},\frac{\amin}{\amax}\frac{\rho}{1 + \subscr{k}{d}\amax}\Big\}. \label{thm:def:c}
\end{align}
\end{mytheorem}
\begin{proof}
By Assumption~\ref{ass:1_f}, $f$ is differentiable almost everywhere by Rademacher's theorem, and the Jacobian of the dynamics~\eqref{eq:riemannian_saddle_affine} exists almost everywhere and is given by
$$
\subscr{J}{F}(z)
:=
\begin{bmatrix}
- M^{-1} \left(\nabla^2 f(x) + \subscr{k}{p} A^\top A\right) & - M^{-1} A^\top\\
\subscr{k}{i}A & 0 
\end{bmatrix}.
$$
To prove strong infinitesimal contractivity it suffices to show that for all $z$ for which $\subscr{J}{F}(z)$ exists, the bound $\mu_{P}(\subscr{J}{F}(z)) \leq -c$ holds for $P, c$ given in~\eqref{thm:def:P},~\eqref{thm:def:alpha} and~\eqref{thm:def:c}, respectively. The assumption of $\rho$-strong convexity and $L$-smoothness of $f$ imply the inequalities $\rho I_n \preceq \nabla^2 f(x) \preceq L I_n$, for all $x$ for which the Hessian exists. Moreover, it holds:
\begin{equation*}
\sup_{z}\mu_{P}(\subscr{J}{F}(z)) \leq \max_{\rho I_n \preceq B \preceq L I_n} \mu_{P}\left(
\begin{bmatrix}
- M^{-1} \left(B + \subscr{k}{p} A^\top A\right)  & - M^{-1} A^\top \\
\subscr{k}{i}A & 0
\end{bmatrix}
\right), 
\end{equation*}
where the $\sup$ is over all points for which $\subscr{J}{F}(z)$ exists. 
The above matrix has the general scaled saddle structure of Lemma~\ref{lemma:saddle-matrices_general} in the Appendix with $B' := B + \subscr{k}{p} A^\top A$, $\tau := \subscr{k}{i}^{-1}$, and $M = I_n + \subscr{k}{d}A^\top A$. Assumptions~\ref{ass:1_f} and~\ref{ass:2_A} implies that $\rho \preceq B' \preceq (L + \subscr{k}{p}\amax) I_n$, and $I_n \preceq M \preceq (1 + \subscr{k}{d}\amax) I_n$.
The result then follows from Lemma~\ref{lemma:saddle-matrices_general} with $\subscr{m}{min}= 1$, $\subscr{m}{max}= 1 + \subscr{k}{d} \amax$, $\subscr{b}{min} = \rho$, and $\subscr{b}{max} = L + \subscr{k}{p}\amax$.
\end{proof}
Theorem~\ref{thm:riemannian_saddle_affine-contractivity} proves that~\eqref{eq:riemannian_saddle_affine} is contracting for any gains $\subscr{k}{i}>0$, $\subscr{k}{p}\geq 0$, and $\subscr{k}{d}\geq0$. In particular, no additional tuning conditions are required to guarantee global exponential convergence. The gains only affect the contraction rate $c$ in~\eqref{thm:def:c}, and therefore influence the speed of convergence but not stability. Consequently, all trajectories of~\eqref{eq:riemannian_saddle_affine} converge exponentially to a unique equilibrium. Contractivity also ensures incremental stability and robustness to vector-field perturbations~\cite{FB:26-CTDS_cdc}.

Let $(x(t), \xi(t))$ and $(x^\star, \xi^\star)$ be a trajectory and the unique equilibrium of~\eqref{eq:riemannian_saddle_affine}, respectively, and define $\delta x := x(t) - x^\star$ and $\delta \xi := \xi(t) - \xi^\star$. The matrix $P$ in Theorem~\ref{thm:riemannian_saddle_affine-contractivity} defines the Lyapunov function
$$
V(x,\xi) =
\begin{bmatrix} \delta x \\ \delta \xi\end{bmatrix}^\top
P
\begin{bmatrix} \delta x \\ \delta \xi\end{bmatrix}
= \delta x^\top M \delta x + 2\alpha \delta x^\top A^\top \delta \xi + \subscr{k}{i}^{-1} \|\delta \xi\|^2.
$$
By Theorem~\ref{thm:riemannian_saddle_affine-contractivity}, this is an energy that decays exponentially along the trajectories of PID-SPF.
The terms of $V$ reflect the structure of the dynamics: $\delta x^\top M \delta x$ captures the Riemannian geometry induced by $\subscr{k}{d}$, $\subscr{k}{i}^{-1} \|\delta \xi\|^2$ measures the dual energy, and the cross-term $2\alpha \delta x^\top A^\top \delta \xi$ represents the coupling between primal and dual variables induced by the constraints.

Using Euler discretization with sufficiently small step sizes~\cite{FB-PCV-AD-SJ:21e}, convergence guarantees are preserved for the discrete-time system corresponding to~\eqref{eq:riemannian_saddle_affine}, providing a computationally efficient numerical scheme for computing the equilibrium of the dynamics.
\section{Numerical Examples}
\label{sec:numerical_example}
We illustrate the effectiveness of PID-SPF~\eqref{eq:closed_loop_metric} in solving equality-constrained OPs~\eqref{eq:eq_constrained} via (i) a quadratic program with linear constraints, and (ii) a bilevel optimization problem.

\subsection{Quadratic Programming}
Consider problem~\eqref{eq:eq_constrained} with objective function $f(x) = x^\top Qx$ and constraints $h(x) = Ax-b$. The cost function is chosen to satisfy Assumption~\ref{ass:1_f}. Specifically, the matrix $Q$ is randomly generated such that the resulting quadratic function is strongly convex and smooth with parameters $\rho = 3$ and $L = 4$, respectively. We randomly select $A$ satisfying Assumption~\ref{ass:2_A}.
We set $n=10$, $m= 2$, $\subscr{k}{p} = 15$ and $\subscr{k}{i} = 100$, while we consider different values of $\subscr{k}{d}$ to illustrate its effect on convergence. We simulate~\eqref{eq:riemannian_saddle_affine} over the time interval $t\in [0,20]$ with a forward Euler discretization with stepsize $\Delta t =0.01$ that is in accordance with the conditions in~\cite{FB-PCV-AD-SJ:21e}, starting from random initial conditions sampled from a uniform distribution over $[0,2)$. 
Figure~\ref{fig:QPRes} illustrates the mean and the minimum and maximum values of the logarithm of the $\norm{\cdot}_P$ distance from the optimal solution $z^\star$ across 50 simulated trajectories of~\eqref{eq:riemannian_saddle_affine}. The solution to the equality constrained quadratic program is computed explicitly. In agreement with Theorem~\ref{thm:riemannian_saddle_affine-contractivity}, the figure shows that convergence is linearly bounded.
Moreover, consistent with the rate characterized in Theorem~\ref{thm:riemannian_saddle_affine-contractivity}, the results show how increasing $\subscr{k}{d}$ may decrease the convergence rate, depending on the parameters of the OP.
The PI-CMO dynamics are related to~\eqref{eq:riemannian_saddle_affine} through the smooth diffeomorphism~\eqref{eq:diffeo}, so they produce the same trajectories only in the primal variable $x$, while the dual variables are transformed by $T$.
We do not include a comparison with the PI-CMO convergence rate in~\cite{VC-SMF-SP-DR:25}, since its preference over the equivalent controller (setting $\subscr{k}{d} = 0$ in PID-SPF) is a function of the OP parameters as well.
\begin{figure}[ht]
    \centering
\includegraphics[width=.55\linewidth,page = 1]{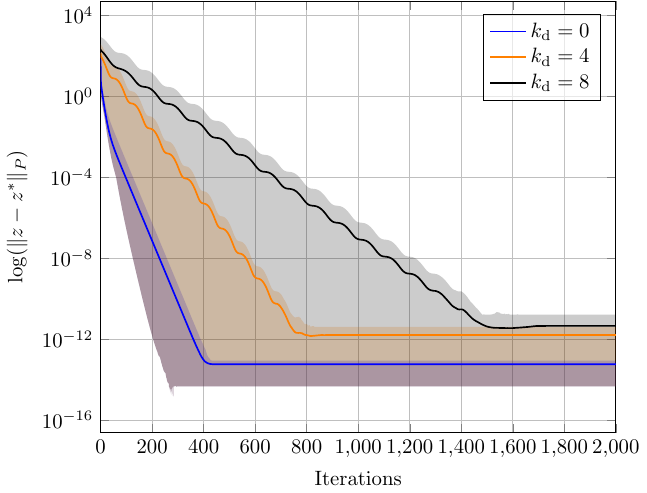}
    \caption{Optimization of a  quadratic program with linear constraints using PID-SPF for $n=10$, $m= 2$. The PID gains are set as $\subscr{k}{p} = 15$, $\subscr{k}{i} = 100$ and $\subscr{k}{d} \in \{0, 4, 8\}$. The mean across $50$ different initial conditions is plotted in the figure and the shaded region corresponds to the minimum and maximum values of $\log(\|z-z^\star\|_P)$ at each iteration.}
    \label{fig:QPRes}
\end{figure}
\subsection{Bilevel Optimization}
We consider bilevel optimization problems of the form
\begin{equation}
\begin{aligned}
\min_{x \in \R^n} & f\bigl(x, y^\star(x)\bigr)\\
\text{s.t. } & y^\star(x) \in \argmin_{y \in \R^m} g(x,y),
\end{aligned}
\label{op:bilevel-opt}
\end{equation}
where the functions $\map{f}{\R^{n}\times\R^m}{\R}$ and $\map{g}{\R^{n}\times\R^m}{\R}$ are smooth. Bilevel optimization is commonly used to model Stackelberg games. Here, we consider a specific choice of $f(x,y)$ and $g(x,y)$ that emulate a leader-follower game for risk-aware resource allocation. The leader sets an allocation vector $x \in \mathbb{R}^n$ using a smooth penalization of the worst-case via a log-sum-exp term in the upper-level objective. The map $f(x,y)$ also includes a consistency term enforcing agreement between the anticipated response $Cx$ and the actual system reaction $y(x)$. The follower responds accordingly, and sets an operational state $y \in \mathbb{R}^m$ by minimizing a quadratic cost.
The convex upper-level objective function in~\eqref{op:bilevel-opt} is therefore
\begin{equation*}
    f(x,y) = \log\left(\sum_{i=1}^n \e^{x_i}\right)+\lambda\|Cx-y(x)\|_2^2,
\end{equation*}
while the quadratic lower-level cost function is
\begin{equation*}
    g(x,y) = \frac{1}{2}y^\top Qy+(A x+b)^\top y,
\end{equation*}
where $Q = Q^\top \in \R^{m \times m}$, $Q \succ 0$, $A \in \R^{m \times n}$ and $b \in \R^m$. The gradient of the lower-level objective with respect to $y$ is $h(x,y) := \nabla_y g(x,y) = A x + Q y + b$, and can be rewritten as $h(x,y) = A' [x,y]^\top - b$, where $A^\prime := [A \quad Q]$.
Using the first-order optimality condition for the lower-level problem, we obtain the following equivalent constrained optimization problem in the form of~\eqref{eq:eq_constrained} 
\beq
\label{op:bilevel_constrained}
\begin{aligned}
\min_{x \in \R^n, y \in \R^m} & f(x,y)\\
\text{s.t. } & h(x,y) := \nabla_y g(x,y) = \0_m.
\end{aligned}
\eeq
Bilevel optimization algorithms theoretically require the exact lower-level optimum. In practice, however, numerical solvers can only provide approximate solutions within a finite number of iterations, thereby introducing uncertainty into the upper-level optimization. To capture this effect, we consider the first-order optimality condition in~\eqref{op:bilevel_constrained} in the presence of bounded noise. Specifically, we introduce $\tilde h(x,y) = h(x,y)+w$ in~\eqref{eq:closed_loop_metric} instead of $h(x,y)$, where $w \in \mathbb{R}^m$ satisfies $\|w\|_2\leq W$.

We simulate PID-SPF~\eqref{eq:riemannian_saddle_affine} to solve~\eqref{op:bilevel_constrained} over the interval $t\in[0,20]$ using forward Euler discretization with time step $\Delta t = 0.01$. Since the problem formulation does not fulfill Assumption~\ref{ass:1_f}, the step size is chosen arbitrarily, with $\Delta t \in (0,1)$.  The parameters $Q \succ 0$, $A$, $b$, $C$ and $\lambda$ are chosen arbitrarily. Also, we set $n=1$, $m=1$, $\lambda = 0.01$, $\subscr{k}{p} = 15$, $\subscr{k}{i} = 100$, and we consider various values of $\subscr{k}{d}$.

For an injected uncertainty level of $W = 0.5$, Figure~\ref{fig:BilevelRes} shows the convergence of PID-SPF for different values of $\subscr{k}{d}$, along with the corresponding trajectory towards the solution of the bilevel problem $(x^\star,y^\star)$. This solution is computed via the constrained trust region method in \texttt{SciPy}.
For the chosen $\Delta t$, the optimization algorithm does not converge for $\subscr{k}{d} = 0.0$, which further emphasizes the role played by the derivative term in projecting onto the solution set of the lower-level problem. In the presence of uncertainty, increasing the value of $\subscr{k}{d}$ results in 
converging to a smaller neighborhood of the optimal solution, in agreement with our analysis of convergence of PID-SPF towards projected saddle point flow as $\subscr{k}{d} \to \infty$. Moreover, as in the control of general linear systems, increasing $\subscr{k}{d}$ decreases the overshoot, corresponding to a dampened oscillatory behavior toward $(x^\star,y^\star)$ in the optimization setting.
\begin{figure}[h]
    \centering
    \includegraphics[width = .55\linewidth, page = 2]{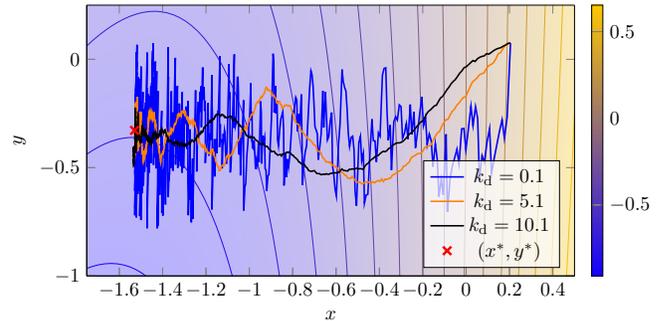}
    \caption{Bilevel Optimization using PID-SPF for $\subscr{k}{d} \in \{0.1, 5.1, 10.1\}$, $\subscr{k}{p} = 15$, and $\subscr{k}{i} = 100$. Contour plot is that of $f(x,y)$ for $n=1$, and $m=1$.}
    \label{fig:BilevelRes}
\end{figure}

Ultimately, the above results extend to bilevel problems with upper-level cost $f(x,y)$ and strongly convex, twice differentiable lower-level objective $g(x,y)$. The lower-level problem then has a unique minimizer for every $x\in\R^n$, and its first-order optimality condition is necessary and sufficient. Therefore, the bilevel problem can be equivalently written as~\eqref{op:bilevel_constrained} and solved accordingly using~\eqref{eq:riemannian_saddle_explicit}.
\section{Discussion and Conclusion}
\label{sec:discussion_conclusion}
We presented a unified control-theoretic framework for saddle-point flows for equality-constrained optimization.
We showed that PID feedback on the dual variable systematically generates a class of saddle-point flows associated with the (augmented) Lagrangian. We termed the resulting dynamics~\eqref{eq:closed_loop_metric} \emph{PID-saddle-point flow} (PID-SPF) and showed that its equilibria coincide with the stationary points of the original OP~\eqref{eq:eq_constrained}.
Moreover, we characterized how the feedback affects the resulting dynamics.
Then, for strongly convex and $L$-smooth cost functions, and full row-rank affine constraints, we proved strong infinitesimally contractivity of the PID-SPF and derived explicit bounds on the convergence rate.
Finally, we illustrated the effectiveness of our framework on equality-constrained quadratic programs and on a bilevel optimization problem with a strongly convex lower-level objective corresponding to uncertain optimality conditions.

As future work, it would be of interest to (i) extend the convergence analysis to general nonlinear constraints, including inequality constraints, and nonconvex objectives, thereby broadening the applicability of our framework; (ii) analyze the discretization of the continuous-time flow and the properties of the resulting algorithms; 
(iii) provide convergence analysis in the presence of uncertainty; and (iv) further investigate the role of the geometry induced by the derivative action, as well as its potential connections with adaptive and preconditioned optimization methods.

\section*{Acknowledgments}
The authors wish to thank Jonas Ohnemus for insightful comments.

This work was supported as a part of NCCR Automation, a National Centre of Competence in Research, funded by the Swiss National Science Foundation (grant number 51NF40\_225155)

\bibliographystyle{plainurl+isbn}
\bibliography{main}

\appendix
\renewcommand{\thelem}{\Alph{section}.\arabic{lem}}
\section{Appendix}
\label{apx:log_norm_of_saddle_matrix}
This section derives explicit bounds on the logarithmic norm of the scaled saddle matrices that appear as Jacobians of the PID-SPF~\eqref{eq:riemannian_saddle_affine}.
The results extend the logarithmic norm analysis of saddle matrices appearing in primal-dual dynamics in~\cite[Lemma 20]{AD-VC-AG-GR-FB:23f} and recover that result then when $\subscr{k}{p}= \subscr{k}{d} = 0$.
\begin{lem}[Logarithmic norm of scaled saddle matrices]
\label{lemma:saddle-matrices_general}
Given $B=B^\top$, $M = M^\top \in\R^{n \times n}$, $M\succ 0$, and $A\in\R^{m \times n}$, with $m\leq n$, we consider the \emph{scaled saddle matrix}
\begin{equation*}
S =
\begin{bmatrix}
-M^{-1}B & - M^{-1}A^\top \\
\tau^{-1} A & 0
\end{bmatrix}
\in \R^{(m+n)\times (m+n)}.
\end{equation*}  
Then, for each $(M, B,A)$ satisfying $\subscr{m}{min}I_n\preceq M \preceq \subscr{m}{max} I_n$, $\subscr{b}{min} I_n\preceq B \preceq \subscr{b}{max} I_n$ and $\amin I_m \preceq A A^\top \preceq \amax I_m$, for $\subscr{m}{min}, \subscr{m}{max}, \subscr{b}{min}, \subscr{b}{max}, \amin,\amax \in \R_{>0}$, the following contractivity LMI holds: $S^\top P + PS\preceq -2 c P \iff  \lognorm{S}{P} \leq - c$, where
\begin{align}
P &=
\begin{bmatrix}
M & \alpha A^\top \\
\alpha A & \tau I_m
\end{bmatrix}
\succ 0, \label{lem:def:P}\\
\alpha&=\frac{1}{2}\min\Big\{\frac{\subscr{m}{min}}{\subscr{b}{max}},\tau\frac{\subscr{b}{min}}{\amax}\Big\}, 
\label{lem:def:alpha} \\
c&=\frac{1}{2}\alpha\tau^{-1}\frac{\amin}{\subscr{m}{max}}.
\label{lem:def:c}
\end{align}
\end{lem}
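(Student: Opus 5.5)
The plan is to verify the LMI $S^\top P + PS \preceq -2cP$ by direct block computation and then to dominate the resulting quadratic form with Young's inequality. The three steps are: (a) check that $P\succ 0$; (b) compute $-(PS+S^\top P)$ explicitly; (c) show that $-(PS+S^\top P) - 2cP \succeq 0$ by testing against an arbitrary vector $(u,v)\in\R^{n}\times\R^m$. The equivalence with $\lognorm{S}{P}\le -c$ then follows from the characterization of $\mu_P$ recalled in the preliminaries.

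For (a), I would use a Schur complement with respect to the $M$ block. Positivity of $P$ is equivalent to $\tau I_m - \alpha^2 A M^{-1}A^\top \succ 0$. Since $AM^{-1}A^\top \preceq \subscr{m}{min}^{-1}\amax I_m$, it suffices that $\alpha^2 < \tau\subscr{m}{min}/\amax$. Multiplying the two bounds in \eqref{lem:def:alpha} gives $\alpha^2 \le \tau\subscr{m}{min}\subscr{b}{min}/(4\subscr{b}{max}\amax) \le \tau\subscr{m}{min}/(4\amax)$, which is enough. The same estimate gives the upper bound $P \preceq 2\,\mathrm{diag}(M,\tau I_m)$, by Young's inequality on the off-diagonal term: $2\alpha u^\top A^\top v \le u^\top M u + \alpha^2 v^\top AM^{-1}A^\top v \le u^\top Mu + \tfrac{\tau}{4} \|v\|^2$. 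I will use this bound in (c).

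For (b), multiplying out gives
\[
PS = \begin{bmatrix} -B + \alpha\tau^{-1}A^\top A & -A^\top \\ A - \alpha A M^{-1}B & -\alpha AM^{-1}A^\top\end{bmatrix}.
\]
The $\pm A^\top$ terms cancel in the symmetrization, so
\[
-(PS+S^\top P) = \begin{bmatrix} 2B - 2\alpha\tau^{-1}A^\top A & \alpha BM^{-1}A^\top \\ \alpha AM^{-1}B & 2\alpha AM^{-1}A^\top\end{bmatrix}.
\]
The key structural fact is that the skew part of the saddle coupling disappears. Only a cross term of size $\alpha$ remains, and it can be absorbed.

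For (c), I would bound the cross term by Young's inequality:
\[
2\alpha u^\top BM^{-1}A^\top v \ge -\alpha\, u^\top BM^{-1}B u - \alpha\, v^\top AM^{-1}A^\top v .
\]
This leaves
\[
u^\top\bigl(2B - 2\alpha\tau^{-1}A^\top A - \alpha BM^{-1}B\bigr)u + \alpha\, v^\top AM^{-1}A^\top v .
\]
Using $BM^{-1}B \preceq (\subscr{b}{max}/\subscr{m}{min})B$ and $\alpha\le \subscr{m}{min}/(2\subscr{b}{max})$, the term $\alpha BM^{-1}B$ is at most $\tfrac12 B$. Using $\alpha\le \tau\subscr{b}{min}/(2\amax)$, the term $2\alpha\tau^{-1}A^\top A$ is at most $\subscr{b}{min}I_n \preceq B$. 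The $u$-block is therefore at least $\tfrac12 B \succeq \tfrac12\subscr{b}{min} I_n$. The $v$-block is at least $\alpha\amin/\subscr{m}{max}\, I_m = 2c\tau\, I_m$. It then remains to compare with $2cP \preceq 4c\,\mathrm{diag}(M,\tau I)$.

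The main obstacle is this constant bookkeeping, and the two blocks behave differently. The $u$-block requires $\tfrac12 \subscr{b}{min} \ge 4c\,\subscr{m}{max}$, i.e.\ $2\alpha\tau^{-1}\amin \le \tfrac12\subscr{b}{min}$, which holds by the second bound in \eqref{lem:def:alpha} since $\amin\le\amax$. The $v$-block is tight by a factor of two, because it needs $2c\tau \ge 4c\tau$. If that happens, I would redo the Young splitting with unequal weights. For instance, I would take weights $\tfrac12$ and $2$ in the cross-term bound, so that $\tfrac32\alpha AM^{-1}A^\top$ survives on the $v$-block and the extra $u$-cost $\alpha BM^{-1}B \preceq \tfrac12 B$ remains affordable. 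I would also use the sharper bound $P\preceq \mathrm{diag}(2M, \tfrac54\tau I)$ in place of the crude factor $2$ on both blocks. With these choices the $v$-block needs $\tfrac32\cdot 2c\tau \ge \tfrac52 c\tau$, which holds, and the $u$-block still closes with room to spare.
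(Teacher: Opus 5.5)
Your steps (a) and (b) are correct. The Schur-complement check of $P\succ 0$, the bound $P\preceq \mathrm{diag}(2M,\tfrac54\tau I_m)$, and your formula for $-(PS+S^\top P)$ all agree with the paper's computation.

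The gap is in (c), and it cannot be fixed by tuning weights. You try to fit a block-diagonal matrix between $2cP$ and $-(PS+S^\top P)$, bounding each separately with Young's inequality on its own off-diagonal block. For the stated $\alpha$ and $c$, no such block-diagonal matrix exists in general. Take $n=m=1$ and $A=B=M=\tau=1$, so every bound is tight. Then $\alpha=\tfrac12$, $c=\tfrac14$, $P=\begin{bmatrix}1&\tfrac12\\ \tfrac12&1\end{bmatrix}$, and a direct computation gives $-(PS+S^\top P)=P$ and $2cP=\tfrac12 P$. A diagonal $D=\mathrm{diag}(d_1,d_2)$ with $\tfrac12P\preceq D\preceq P$ would need $d_1+d_2\ge\tfrac12 w^\top Pw=\tfrac32$ for $w=(1,1)^\top$. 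It would also need $d_1+d_2\le w^\top Pw=1$ for $w=(1,-1)^\top$. These cannot both hold.

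Your bookkeeping shows the same failure in two places:
\begin{itemize}
\item The claim $2\alpha\tau^{-1}\amin\le\tfrac12\subscr{b}{min}$ does not follow from $\alpha\le\tau\subscr{b}{min}/(2\amax)$. That bound only gives $2\alpha\tau^{-1}\amin\le\subscr{b}{min}\amin/\amax$.
\item After your reweighting, the $u$-block is bounded below only by $2B-\subscr{b}{min}I_n-B=B-\subscr{b}{min}I_n$. This is $0$ in the example, while the requirement is $4cM=1$. So the $u$-block does not close ``with room to spare''.
\end{itemize}

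The missing idea is that the off-diagonal blocks of $-(PS+S^\top P)$ and $-2cP$ partially cancel. You must therefore treat $Q:=-(PS+S^\top P)-2cP$ as a whole. Its off-diagonal block is $\alpha(BM^{-1}-2cI_n)A^\top$, which equals $\tfrac12-\tfrac14$ in the example. The paper proceeds in three steps:
\begin{itemize}
\item It uses the definition of $c$ to lower-bound the $(2,2)$ block: $2\alpha AM^{-1}A^\top-2c\tau I_m\succeq\alpha AM^{-1}A^\top$.
\item It factors $Q\succeq\mathrm{diag}(I_n,A)\,X\,\mathrm{diag}(I_n,A^\top)$ with $X_{22}=\alpha M^{-1}$.
\item It takes the Schur complement. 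This amounts to applying Young's inequality, with weight $M$, to the combined cross term.
\end{itemize}
The condition then reduces to $2B-\alpha BM^{-1}B\succeq 2\alpha\tau^{-1}A^\top A+2cM$, where both sides are compared with $\tfrac32\subscr{b}{min}I_n$, together with $B\succeq cM$. In the example the first inequality holds with equality. So there is no slack left to pay separately for the cross term of $P$.
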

\begin{proof}
\textbf{Step 1: Positivity of P.}\\
We start by verifying that $P\succ0$. Using the Schur complement of the $(2,2)$ entry, $P\succ0$ is equivalent to
$
M - \tau^{-1} \alpha^2 A^\top A\succ 0.
$
For this LMI holds it suffices that
\begin{align*}
\subscr{\lambda}{min}(M) > \tau^{-1} \alpha^2 \subscr{\lambda}{max}(A^\top A) \iff  \alpha^2 < \tau \frac{\subscr{m}{min}}{\amax}.
\end{align*}
The bound $\ds \alpha^2 < \tau \frac{\subscr{m}{min}}{\amax}$ follows from the tighter inequality $(2\alpha)^2 \leq \displaystyle \tau \frac{\subscr{m}{min}}{\amax}$ which is proved as follows:
\begin{align*}
(2\alpha)^2 = \min\Big\{\frac{\subscr{m}{min}}{\subscr{b}{max}},\tau\frac{\subscr{b}{min}}{\amax}\Big\}^2&\leq
\min\Big\{\frac{\subscr{m}{min}}{\subscr{b}{max}},\tau\frac{\subscr{b}{min}}{\amax}\Big\} \cdot
\max\Big\{\frac{\subscr{m}{min}}{\subscr{b}{max}},\tau\frac{\subscr{b}{min}}{\amax}\Big\}\\
&= \frac{\subscr{m}{min}}{\subscr{b}{max}} \cdot \tau\frac{\subscr{b}{min}}{\amax} \leq \tau \frac{\subscr{m}{min}}{\amax}.
\end{align*}
\textbf{Step 2: Expansion of the LMI.}\\
Next, we aim to show that $-S^\top P - PS -2 c P\succeq 0$. We compute
\begin{align}
-  S^\top P&  - PS - 2 c P
=
\begin{bmatrix}
BM^{-1} & - \tau^{-1}A^\top \\
A M^{-1} & 0
\end{bmatrix}
\begin{bmatrix}
M & \alpha A^\top \\
\alpha A & \tau I_m
\end{bmatrix}
+
\begin{bmatrix}
M & \alpha A^\top \\
\alpha A & \tau I_m
\end{bmatrix}
\begin{bmatrix}
M^{-1}B & M^{-1}A^\top \\
-\tau^{-1} A & 0
\end{bmatrix}
- 2 c
\begin{bmatrix}
M  & \alpha A^\top \\
\alpha A & \tau I_m
\end{bmatrix}
\\
&= 
\begin{bmatrix}
BM^{-1}M  + M M^{-1}B  - \tau^{-1} \alpha  A^\top A  - \tau^{-1} \alpha A^\top A  - 2 c M 
&\alpha B M^{-1}  A^\top -  A^\top + M M^{-1} A^\top - 2 c \alpha  A^\top\\
A M^{-1}M + \alpha AM^{-1}B -  A - 2 c \alpha A
&  2 \alpha A M^{-1} A^\top  - 2 c \tau I_m
\end{bmatrix}\\
&= 
\begin{bmatrix}
2 B - 2 \tau^{-1} \alpha  A^\top A  - 2 c M 
& \left(\alpha BM^{-1} - 2 c \alpha  I_n\right)A^\top \\
A \left(\alpha M^{-1}B - 2 c \alpha I_n\right)
&  2 \alpha A M^{-1}A^\top  - 2 \tau c I_m
\end{bmatrix}.
\end{align}
The (2,2) block satisfies the lower bound 
\begin{align*}
2 \alpha A M^{-1}A^\top  - 2 \tau c I_m
&= 2 \big(\frac{1}{2} \alpha A M^{-1} A^\top - \tau c I_m\big) + \alpha A M^{-1} A^\top \\
& \succeq 2\big(\alpha\frac{1}{2}\frac{\amin}{\subscr{m}{max}} - \tau c \big)I_m   + \alpha A M^{-1} A^\top 
\overset{\eqref{lem:def:c}}{=} \alpha A M^{-1} A^\top\succ 0.
\end{align*}
Given this lower bound, we can factorize the resulting matrix as follows:
$$
-S^\top P - PS -2 c P \succeq
\begin{bmatrix} I_n & 0 \\ 0 & A\end{bmatrix}
\underbrace{\begin{bmatrix}
2 B - 2 \tau^{-1} \alpha  A^\top A  - 2 c M 
&\alpha BM^{-1} - 2 c \alpha  I_n\\
\alpha M^{-1}B - 2 c \alpha I_n
& \alpha M^{-1}
\end{bmatrix}}_{2n\times 2n}
\begin{bmatrix} I_n & 0 \\ 0 & A^\top \end{bmatrix}.
$$
\textbf{Step 3: Schur complement verification.}\\
Since $\alpha M^{-1} \succ 0$, it now suffices to show that the Schur complement of the (2,2) block of the $2n\times 2n$ matrix is positive
semidefinite. We compute
\begin{align*}      
& 2 B - 2 \tau^{-1} \alpha  A^\top A  - 2 c M - \alpha \bigl(BM^{-1} - 2 c  I_n\bigr) M 
\bigl(M^{-1}B - 2 c I_n\bigr) \succeq 0 \\
& \iff  2 B - 2 \tau^{-1} \alpha  A^\top A  - 2 c M - \alpha \bigl(B - 2 c M\bigr)\bigl(M^{-1}B - 2 c I_n\bigr) \succeq 0\\
& \iff  2 B - 2 \tau^{-1} \alpha  A^\top A  - 2 c M - \alpha \bigl(BM^{-1}B - 2 c B - 2 c B + 4 c^2 M\bigr) \succeq 0\\
&\impliedby 2B  - \alpha BM^{-1}B \succeq 2 \tau^{-1} \alpha  A^\top A  + 2 c M
\quad \text{and} \quad 4 \alpha c B \succeq 4 \alpha c^2 M.
\end{align*}

\textbf{Step 4: Prove $\mathbf{2B - \alpha BM^{-1}B \succeq 2 \tau^{-1} \alpha  A^\top A  + 2 c M}$.}\\
To prove
\beq
\label{eq:ineq_1_lem}
2B - \alpha BM^{-1}B \succeq 2 \tau^{-1} \alpha  A^\top A  + 2 c M,
\eeq
we first upper bound its right hand side as follows
\begin{align*}
2 \tau^{-1} \alpha  A^\top A  + 2 c M &\preceq \alpha \left(2 \tau^{-1} \amax + 2 c \subscr{m}{max}\right) \overset{\eqref{lem:def:c}}{\preceq} \alpha\tau^{-1} (2\amax + \amin)I_n \\
& \!\!\!\!\!\!\!\!\! \overset{\alpha\leq\frac{1}{2}\subscr{b}{min}\tau/\amax}{\preceq}
\frac{1}{2} \frac{\subscr{b}{min}}{\amax} (2\amax + \amin) I_n \preceq \ds \frac{3}{2}\subscr{b}{min} I_n.
\end{align*}
Next, we upper bound the left hand side of~\eqref{eq:ineq_1_lem} as follows:
\begin{align*}
2B - \alpha BM^{-1}B &\succeq 2B - \alpha\frac{1}{\subscr{m}{min}} B^2 \succeq 2B - \alpha\frac{\subscr{b}{max}}{\subscr{m}{min}} B \succeq \left(2 - \alpha\frac{\subscr{b}{max}}{\subscr{m}{min}}\right) B \succeq \left(2 - \frac{1}{2}\right) B \succeq \frac{3}{2}\subscr{b}{min} I_n.
\end{align*}

\textbf{Step 5: Prove $\mathbf{4 \alpha c B \succeq 4 \alpha c^2 M}$.}\\
The LMI $4 \alpha c B \succeq 4 \alpha c^2 M$ follows from the inequality $\ds c \leq \frac{\subscr{b}{min}}{\subscr{m}{max}}$. This follows noticing that,~\eqref{lem:def:c} implies $\ds c\leq \frac{1}{4} \frac{\amin \subscr{b}{min}}{\amax\subscr{m}{max}} < \frac{\subscr{b}{min}}{\subscr{m}{max}}$.
This concludes the proof.
\end{proof}
\end{document}